\documentclass[twoside]{amsart}


\usepackage{amsmath,amsfonts,amsthm,mathrsfs}
\usepackage{amssymb}

\usepackage[unicode,bookmarks]{hyperref}
\usepackage[alphabetic,initials]{amsrefs}

\usepackage{ifthen}
\usepackage{enumitem}


\newcommand{\Dmod}{\mathcal{D}}
\newcommand{\Mmod}{\mathcal{M}}
\newcommand{\Nmod}{\mathcal{N}}


\newcommand{\decal}[1]{\lbrack #1 \rbrack}

\newcommand{\ltriangle}[4][]%
{\begin{diagram}[#1]%
	{#2} &\rTo& {#3} &\rTo& {#4} &\rTo& {#2 \decal{1}}%
\end{diagram}}

\newcommand{\shH}{\mathcal{H}}



\newcommand{\tensor}{\otimes}



\newcommand{\ZZ}{\mathbb{Z}}

\newcommand{\CC}{\mathbb{C}}

\newcommand{\PPn}[1]{\mathbb{P}^{#1}}


\newcommand{\menge}[2]{\bigl\{ \thinspace #1 \thinspace\thinspace \big\vert%
\thinspace\thinspace #2 \thinspace \bigr\}}


\DeclareMathOperator{\rk}{rk}

\DeclareMathOperator{\Spec}{Spec}

\DeclareMathOperator{\sing}{sing}


\newcommand{\define}[1]{\emph{#1}}



\newcommand{\shf}[1]{\mathscr{#1}}
\newcommand{\OX}{\shf{O}_X}
\newcommand{\OU}{\shf{O}_U}



\newcommand{\argbl}{-}


\newcommand{\into}{\hookrightarrow}
\newcommand{\onto}{\to\hspace{-0.7em}\to}


\newcommand{\shHO}{\shH_{\shf{O}}}

\newcommand{\shV}{\mathcal{V}}

\newcommand{\shVO}{\shV_{\shf{O}}}

\newcommand{\pu}{p^{\ast}}

\newcommand{\IH}{\mathit{IH}}

\newcommand{\shF}{\shf{F}}

\newcommand{\shO}{\shf{O}}

\usepackage{chngcntr}

\makeatletter
\let\@@seccntformat\@seccntformat
\renewcommand*{\@seccntformat}[1]{%
  \expandafter\ifx\csname @seccntformat@#1\endcsname\relax
    \expandafter\@@seccntformat
  \else
    \expandafter
      \csname @seccntformat@#1\expandafter\endcsname
  \fi
    {#1}%
}
\newcommand*{\@seccntformat@subsection}[1]{%
  \textbf{\csname the#1\endcsname.}
}
\makeatother

\makeatletter
\let\@paragraph\paragraph
\renewcommand*{\paragraph}[1]{%
	\vspace{0.3\baselineskip}%
	\@paragraph{\textit{#1}}%
}
\makeatother

\counterwithin{equation}{subsection}
\counterwithout{subsection}{section}
\counterwithin{figure}{subsection}

\newtheorem{theorem}[equation]{Theorem}
\newtheorem*{theorem*}{Theorem}
\newtheorem{lemma}[equation]{Lemma}
\newtheorem*{lemma*}{Lemma}

\newtheorem{proposition}[equation]{Proposition}
\newtheorem*{proposition*}{Proposition}
\newtheorem{conjecture}[equation]{Conjecture}

\theoremstyle{definition}
\newtheorem{definition}[equation]{Definition}
\newtheorem*{definition*}{Definition}
\theoremstyle{remark}
\newtheorem*{remark}{Remark}

\newtheorem*{question}{Question}

\newtheorem{example}[equation]{Example}
\newtheorem*{example*}{Example}

\theoremstyle{plain}

\makeatletter
\let\old@caption\caption
\renewcommand*{\caption}[1]{%
	\setcounter{figure}{\value{equation}}%
	\stepcounter{equation}%
	\old@caption{#1}\relax%
}
\makeatother

\newcounter{thmA}
\newtheorem{theorem-intro}[thmA]{Theorem}

\newcommand{\pp}{\mathfrak{p}}
\newcommand{\mm}{\mathfrak{m}}
\DeclareMathOperator{\Mat}{Mat}

\newcommand{\nut}{\tilde{\nu}}
\newcommand{\sigmat}{\tilde{\sigma}}
\newcommand{\It}{\tilde{I}}
\newcommand{\Mmodt}{\tilde{\Mmod}}
\newcommand{\Nmodt}{\tilde{\Nmod}}
\renewcommand{\Dmod}{\mathscr{D}}
\newcommand{\shT}{\mathscr{T}}

\begin{document}

\title{The zero locus of the infinitesimal invariant}

\author[G.~Pearlstein]{Greg Pearlstein}
\address{%
Department of Mathematics \\
Michigan State University \\
East Lansing, MI 48824
}
\email{gpearl@math.msu.edu}

\author[Ch.~Schnell]{Christian Schnell}
\address{%
	Institute for the Physics and Mathematics of the Universe \\
	The University of Tokyo \\
	5-1-5 Kashiwanoha, Kashiwa-shi \\
	Chiba 277-8583, Japan
}
\email{christian.schnell@ipmu.jp}

\begin{abstract}
Let $\nu$ be a normal function on a complex manifold $X$. The infinitesimal 
invariant of $\nu$ has a well-defined zero locus inside the tangent bundle 
$TX$. When $X$ is quasi-projective, and $\nu$ is admissible, we show that this 
zero locus is constructible in the Zariski topology.
\end{abstract}
\maketitle

\section{Introduction}

\subsection{Main result}

Let $\shH$ be a variation of Hodge structure of weight $-1$ on a Zariski
open subset of a smooth complex projective variety $X$. We shall assume that 
$\shH$ is polarizable and defined over $\ZZ$. We denote the Hodge filtration on
the underlying flat vector bundle $\shHO$ by the symbol $F^{\bullet} \shHO$. Let 
$\nu$ be a normal function, that is to say, a holomorphic and horizontal 
section of the family of intermediate Jacobians $J(\shH)$. For any local 
lifting $\nut$ to a holomorphic section of $\shHO$, we have
\[
	\nabla \nut \in \Omega_X^1 \tensor_{\OX} F^{-1} \shHO,
\]
which is independent of the choice of lifting modulo $\nabla(F^0 \shHO)$. We are
interested in the subset of the tangent bundle $TX$ defined by the condition 
$\nabla \nut \in \nabla(F^0 \shHO)$. Concretely, this is the set
\[
	I(\nu) = \menge{(x, \xi) \in TX}%
		{\text{$\nabla_{\xi} (\nut - \sigma)(x) = 0$ for some $\sigma 
\in F^0 \shHO$}}.
\]
The following theorem describes the structure of $I(\nu)$ for admissible $\nu$.

\begin{theorem} \label{thm:main}
Suppose that $\nu$ is an admissible normal function on a Zariski open subset of
a smooth complex projective variety $X$. Then $I(\nu)$ is constructible with 
respect to the Zariski topology on $TX$.
\end{theorem}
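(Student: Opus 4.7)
The plan is to express $I(\nu) \subset TU \subset TX$ (where $U \subset X$ is the Zariski open set on which $\shH$ is defined) as the vanishing locus of an algebraic section of an algebraic coherent sheaf on $TU$. Since $TU$ is Zariski open in $TX$, this exhibits $I(\nu)$ as a locally closed algebraic subset of $TX$, a fortiori constructible.

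By Deligne's theorem on the algebraicity of polarizable variations of Hodge structure, the Hodge bundles $F^\bullet \shHO$ and the Gauss--Manin connection $\nabla$ carry a canonical algebraic structure on $U$, with regular singularities along $D = X \setminus U$. In particular, the $\OU$-linear map $\nabla: F^0 \shHO \to \Omega_U^1 \otimes F^{-1}\shHO$ (valued in $F^{-1}$ by Griffiths transversality) is algebraic, and its cokernel
$$\mathcal{Q} = (\Omega_U^1 \otimes F^{-1}\shHO)/\nabla(F^0 \shHO)$$
is an algebraic coherent sheaf on $U$. Letting $\pi: TU \to U$ denote the projection, the set $I(\nu)$ is precisely the zero locus in $TU$ of the section of $\pi^* F^{-1}\shHO/\pi^*\nabla(F^0 \shHO)$ obtained by contracting $\delta\nu \in H^0(U^{\mathrm{an}}, \mathcal{Q}^{\mathrm{an}})$ with the tautological tangent vector. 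It therefore suffices to show that $\delta\nu$ is an algebraic section.

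For this I would invoke M.~Saito's theory of mixed Hodge modules. By Saito's theorem, admissibility of $\nu$ corresponds to the existence of an extension of mixed Hodge modules on $X$,
$$0 \longrightarrow \jlsl(\shH\decal{\dim X})^H \longrightarrow E \longrightarrow \QQ_X^H \longrightarrow 0,$$
whose underlying filtered $\Dmod_X$-module is algebraic. Restricting to $U$ yields an algebraic short exact sequence $0 \to \shHO \to \shE \to \OU \to 0$ of filtered flat bundles whose extension class computes $\nu$. By strictness of the Hodge filtration with respect to the weight filtration, there is a local algebraic lift $\tau \in F^0 \shE$ of $1 \in \OU$; then $\nabla\tau \in \Omega_U^1 \otimes F^{-1}\shE$ by Griffiths transversality, and its projection to the quotient $\Omega_U^1 \otimes F^{-1}\OU$ equals $\nabla 1 = 0$, so $\nabla\tau \in \Omega_U^1 \otimes F^{-1}\shHO$. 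The class of $\nabla\tau$ in $\mathcal{Q}$ is independent of $\tau$ modulo $F^0\shHO$ and coincides with $\delta\nu$, showing that $\delta\nu$ is algebraic.

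The main obstacle is precisely the invocation of Saito's correspondence, for this is where admissibility genuinely enters: without it, $\nu$ is an analytic object on $U$ with no reason to admit an algebraic extension to $X$. Granted this correspondence, the remaining argument is formal, consisting of the verification that the Saito-theoretic local lift $\tau$ computes the classical infinitesimal invariant $\delta\nu$, after which $I(\nu)$ is Zariski closed in $TU$ and hence constructible in $TX$.
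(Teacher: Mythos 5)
Your reduction is essentially the one the paper uses (in its second, mixed-Hodge-module formulation): algebraicity of the Hodge bundles and of the extension class via Deligne and Saito, followed by the identification of $I(\nu)$ with the zero locus of an algebraic section of an algebraic coherent sheaf on the tangent bundle, namely the quotient of $\pi^{\ast}F^{-1}\shHO$ by the subsheaf generated by contracting $\nabla(F^0\shHO)$ with the tautological vector field. Up to that point the argument is sound (modulo the minor point that $\nabla(F^0\shHO)$ is only a $\CC$-linear image, so one must quotient by the $\shO$-submodule it generates, which is what makes the identification with $I(\nu)$ work).

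The genuine gap is the very last step: you conclude that the zero locus of a section of a coherent sheaf is Zariski closed (``locally closed algebraic subset,'' ``Zariski closed in $TU$''). This is false unless the sheaf is locally free, and the quotient sheaf here is typically not locally free, because the rank of the map $\sigma \mapsto \nabla_{\xi}\sigma(x)$ on $F^0\shHO$ jumps along proper closed subsets of $TU$; where the rank drops, the fiber of the quotient gets bigger and points in the closure of $I(\nu)$ can fall out of it. The correct general statement is that the zero locus of a section of a coherent sheaf is \emph{constructible} but not in general closed; this is exactly Proposition~\ref{prop:zero-locus} of the paper, proved by stratifying $\Spec R$ according to the minors of a presentation matrix, and the paper's example with $R = \CC\lbrack x,y\rbrack$, $M = (x,y)$, $m = x$ (zero locus: the $y$-axis minus the origin) shows closedness genuinely fails. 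This is also why the theorem asserts only constructibility of $I(\nu)$ rather than local closedness; your proposal, if correct, would prove a stronger statement than is true. To complete your argument you need to supply this constructibility lemma in place of the closedness claim.
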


Recall that a subset of an algebraic variety is \define{constructible} if it
is a finite union of subsets that are locally closed in the Zariski topology.
The proof of Theorem~\eqref{thm:main} is given in Section~\ref{sec:proof}.
In Section~\ref{sec:GG}, we describe the relationship between this paper and the study
of algebraic cycles via the approach to the Hodge conjecture by
Green--Griffiths~\cite{GG} using singularities of normal functions.

\subsection{Acknowledgments}

G.P.\@ is partially supported by NSF grant DMS-1002625. C.S.\@ is supported by 
the World Premier International Research Center Initiative (WPI Initiative), 
MEXT, Japan, and by NSF grant DMS-1100606. We thank Patrick Brosnan, Matt Kerr 
and James Lewis for helpful discussions.  G.P. also thanks the IHES for partial 
support during the preparation of this manuscript. 

\section{Proof of the theorem}
\label{sec:proof}

\subsection{Algebraic description of the zero locus}

Since $X$ is a projective algebraic variety, it is possible to describe the
zero locus $I(\nu)$ of the infinitesimal invariant of $\nu$ purely in terms of
algebraic objects. In this section, we shall do this by a straightforward classical
argument.

Using resolution of singularities, we may assume without loss of generality that
$\nu$ is an admissible normal function on $X - D$, where $X$ is a smooth projective
variety, and $D \subseteq X$ is a divisor with normal crossings. Let $\shV$ be the
admissible variation of mixed Hodge structure with $\ZZ$-coefficients corresponding
to $\nu$; then $W_{-1} \shV = \shH$ and $W_0 \shV / W_{-1} \shV \simeq \ZZ(0)$ by our
choice of weights. The integrable connection $\nabla \colon \shVO \to \Omega_{X-D}^1
\tensor \shVO$ on the underlying holomorphic vector bundle $\shVO$ has regular
singularities; because $X$ is projective algebraic, it follows from \cite{Deligne}
that $\shVO$ and $\nabla$ are algebraic.  Admissibility implies that each Hodge
bundle $F^p \shVO$ is an algebraic subbundle of $\shVO$; note that they satisfy
$\nabla \bigl( F^p \shVO \bigr) \subseteq \Omega_{X-D}^1 \tensor F^{p-1} \shVO$
because of Griffiths transversality.

To prove the constructibility of $I(\nu)$, our starting point is the exact sequence
\begin{equation} \label{eq:classical}
	0 \to F^0 \shHO \to F^0 \shVO \to \shO \to 0
\end{equation}
of algebraic vector bundles on $X-D$. Let $U$ be any affine Zariski open subset of
$X-D$ with the following two properties: (1) both $F^0 \shHO$ and $F^{-1}
\shHO / F^0 \shHO$ restrict to trivial bundles on $U$; (2) there are coordinates
$x_1, \dotsc, x_n \in \Gamma(U, \OU)$, where $\Gamma(U, \argbl)$ always denotes the
space of all algebraic sections of an algebraic coherent sheaf. Since $X-D$
can be covered by finitely many such open subsets, it is clearly sufficient to show that
$I(\nu) \cap TU$ is a constructible subset of $TU$.

By our choice of $U$, the tangent bundle $TU$ is trivial; let $\xi_1,
\dotsc, \xi_n \in \Gamma(TU, \shO_{TU})$ be the coordinates in the fiber direction
corresponding to the algebraic vector fields $\partial/\partial x_1, \dotsc,
\partial/\partial x_n$. Let $q = \rk F^0 \shHO$ and $p = \rk F^{-1} \shHO \geq q$; we
can then choose algebraic sections $e_1, \dotsc, e_p \in \Gamma(U, F^{-1} \shHO)$
such that $e_1, \dotsc, e_q \in \Gamma(U, F^0 \shHO)$ are a frame for $F^0 \shHO$,
and $e_1, \dotsc, e_p$ are a frame for $F^{-1} \shHO$. For $i=1, \dotsc, q$, we get
\[
	\nabla e_i = \sum_{k=1}^n \sum_{j=1}^p dx_k \tensor a_{i,j}^k e_j
\]
with certain functions $a_{i,j}^k \in \Gamma(U, \OU)$. Let $\nut \in \Gamma(U, F^0
\shVO)$ be any lifting of the element $1 \in \Gamma(U, \OX)$; then $\nabla \nut \in
\Gamma(U, \Omega_U^1 \tensor F^{-1} \shHO)$ can be written in the form
\[
	\nabla \nut = \sum_{k=1}^n \sum_{j=1}^p dx_k \tensor f_j^k e_j
\]
for certain functions $f_j^k \in \Gamma(U, \OU)$. By definition, a point $(x, \xi)
\in TU$ lies in the zero locus $I(\nu)$ of the infinitesimal invariant iff there are
holomorphic functions $\varphi_1, \dotsc, \varphi_q$ defined in a small open ball
around $x \in U$, such that
\[
	\nabla \left( \nut - \sum_{i=1}^q \varphi_i e_i \right)
\]
vanishes at the point $(x, \xi)$. When expanded, this translates into the condition
that
\[
	\sum_{k=1}^n \sum_{j=1}^p \xi_k f_j^k(x) e_j = 
		\sum_{i=1}^q \sum_{k=1}^n \xi_k \frac{\partial \varphi_i}{\partial x_k}(x) e_i
		+ \sum_{i=1}^q \varphi_i(x) \sum_{k=1}^n \sum_{j=1}^p \xi_k a_{i,j}^k(x) e_j.
\]
This is a system of $p$ linear equations in the $q(n+1)$ complex numbers 
\[
	\varphi_i(x) \quad \text{and} \quad \frac{\partial \varphi_i}{\partial x_k}(x)
		\qquad \text{(for $1 \leq i \leq q$ and $1 \leq k \leq n$),}
\]
with coefficients in the ring $\Gamma(TU, \shO_{TU})$ of regular functions on $TU$.
The proof of Proposition~\ref{prop:zero-locus} shows that the set of points $(x, \xi)
\in TU$, where this system has a solution, is a constructible subset of $TU$. This
completes the proof of Theorem~\ref{thm:main}.

\subsection{A more sophisticated description}

For some purposes, it is better to have a natural extension of $I(\nu)$ to the entire
cotangent bundle $TX$, without modifying the ambient variety $X$. In this section, we
indicate how such an extension can be constructed using the theory of mixed Hodge
modules \cite{Saito-MHM}. 

We begin by recalling how one associates a short exact sequence of the form
\begin{equation} \label{eq:extension}
	0 \to F_0 \Mmod \to F_0 \Nmod \to \OX \to 0
\end{equation}
to the given admissible normal function; here $F_0 \Mmod$ and $F_0 \Nmod$ are
algebraic coherent sheaves on $X$, and all three morphisms are morphisms of 
algebraic coherent sheaves.

The polarizable variation of Hodge structure $\shH$ extends uniquely to a 
polarizable Hodge module with strict support equal to $X$. We denote by $\Mmod$
the underlying regular holonomic $\Dmod_X$-module; it is the minimal extension 
of the flat vector bundle $\shHO$. It has a good filtration $F_{\bullet} \Mmod$ 
by $\OX$-coherent subsheaves, and $F_k \Mmod$ is an extension of the Hodge 
bundle $F^{-k} \shHO$. Since $X$ is a complex projective variety, each 
$F_k \Mmod$ is an algebraic coherent sheaf, and $\Mmod$ is an algebraic 
$\Dmod_X$-module.

Because the normal function $\nu$ is admissible, the corresponding variation of
mixed Hodge structure extends uniquely to a mixed Hodge module on $X$; in fact,
this condition is equivalent to admissibility \cite{Saito-ANF}*{p.~243}. 
Let $\Nmod$ denote the underlying regular holonomic $\Dmod_X$-module, and 
$F_{\bullet} \Nmod$ its Hodge filtration; as before, $\Nmod$ is an algebraic 
$\Dmod_X$-module, and each $F_k \Nmod$ is an algebraic coherent sheaf.  We have
an exact sequence of regular holonomic $\Dmod_X$-modules
\[
	0 \to \Mmod \to \Nmod \to \OX \to 0,
\]
in which all three morphisms are strict with respect to the Hodge filtration;
in particular, \eqref{eq:extension} is an exact sequence of algebraic coherent 
sheaves on $X$. Because $\Nmod$ is a filtered $\Dmod_X$-module, we have 
$\CC$-linear morphisms $\shT_X \tensor F_k \Nmod \to F_{k+1} \Nmod$; note that 
they are not $\OX$-linear.

We can use the exact sequence in \eqref{eq:extension} to construct an extension
of the zero locus $I(\nu)$ to all of $X$. Inside the tangent bundle $TX$, we
define a subset 
\[
	\It(\nu) = \menge{(x, \xi) \in TX}%
		{\text{$(\xi \cdot \sigma)(x) = 0$ for some $\sigma \in F_0 
                         \Nmod$ with $\sigma \mapsto 1$}},
\]
where the notation ``$\sigma \in F_0 \Nmod$'' means that $\sigma$ is a 
holomorphic section of the sheaf $F_0 \Nmod$, defined in some open neighborhood
of the point $x \in X$. 

\begin{lemma}
We have $\It(\nu) = I(\nu)$ over the Zariski open subset of $X$ where the 
variation of Hodge structure $\shH$ is defined.
\end{lemma}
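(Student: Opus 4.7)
\emph{Proof plan.} The statement is local on $X - D$, so I would fix a point $x \in X - D$ and show that the two membership conditions define the same subset of the fiber $T_x X$. I plan to proceed in three short steps.

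\emph{Step 1.} Identify the relevant sheaves. Over $X - D$ the minimal extension $\Mmod$ of $\shHO$ is $\shHO$ itself and the $\Dmod$-module $\Nmod$ of the admissible VMHS is just $\shVO$, with $F_0 \Mmod |_{X - D} = F^0 \shHO$ and $F_0 \Nmod |_{X - D} = F^0 \shVO$. Under these identifications \eqref{eq:extension} restricts to \eqref{eq:classical}, and the $\Dmod_X$-module action $\shT_X \tensor F_0 \Nmod \to F_1 \Nmod$ restricts to the flat-connection action $\xi \cdot \tau = \nabla_\xi \tau$ on $\shVO$.

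\emph{Step 2.} Match up the two kinds of local lifting via an auxiliary flat integral section. On a simply-connected neighborhood of $x$, pick $\tau_\ZZ \in \shVZ$ lifting $1 \in \ZZ$; this is possible because the short exact sequence of local systems $0 \to \shHZ \to \shVZ \to \ZZ \to 0$ splits on such opens. For any local $\tau \in F^0 \shVO$ mapping to $1 \in \shO$, the difference $\nut \defeq \tau - \tau_\ZZ$ is a local holomorphic section of $\shHO$ whose class in $\shHO / (F^0 \shHO + \shHZ)$ is $\nu$; this is exactly how the normal function is extracted from the extension $\shV$ of $\ZZ(0)$ by $\shH$. Conversely, every local lift $\nut$ of $\nu$ to $\shHO$ arises this way from some such $\tau$ and $\tau_\ZZ$.

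\emph{Step 3.} Compare the two conditions. Since $\tau_\ZZ$ is flat, $\nabla \nut = \nabla \tau$, and for any local $\sigma \in F^0 \shHO$,
\[
	\nabla_\xi (\nut - \sigma)(x) \;=\; \nabla_\xi (\tau - \sigma)(x) \;=\; \bigl( \xi \cdot (\tau - \sigma) \bigr)(x).
\]
As $\sigma$ varies over local sections of $F^0 \shHO$, the section $\tau - \sigma$ varies over all local lifts of $1$ in $F^0 \shVO = F_0 \Nmod |_{X - D}$. Hence the existence of $\sigma$ making the left-hand side vanish is equivalent to the existence of $\tau' := \tau - \sigma \in F_0 \Nmod$ lifting $1$ with $(\xi \cdot \tau')(x) = 0$, giving the desired equality $I(\nu) = \It(\nu)$ at $(x, \xi)$. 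The one nontrivial input is the dictionary in Step 2, which reflects the standard correspondence between normal functions and extensions of variations of mixed Hodge structure; everything else is simply unpacking definitions and using flatness of $\tau_\ZZ$.
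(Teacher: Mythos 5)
Your proposal is correct and is just a careful unpacking of what the paper dismisses as ``obvious from the definitions'': over $X-D$ one has $F_0\Mmod = F^0\shHO$, $F_0\Nmod = F^0\shVO$, the $\Dmod$-action is $\nabla$, and the standard dictionary between $\nu$ and the extension $\shV$ (your flat integral lift $\tau_\ZZ$) identifies the two membership conditions. No substantive difference from the paper's (one-line) argument.
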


\begin{proof}
This is obvious from the definitions.
\end{proof}

Denote by $p \colon TX \to X$ the projection. The pullback $\pu T_X$ of the 
tangent sheaf has a tautological global section $\theta$; in local holomorphic 
coordinates $x_1, \dotsc, x_n$ on $X$, and corresponding coordinates 
$(x_1, \dotsc, x_n, \xi_1, \dotsc, \xi_n)$ on $TX$, it is given by the formula 
\[
	\theta(x_1, \dotsc, x_n, \xi_1, \dotsc, \xi_n) =
		\xi_1 \frac{\partial}{\partial x_1} + \dotsb 
                      + \xi_n \frac{\partial}{\partial x_n}.
\]
Let $\Mmodt$ denote the pullback of $\Mmod$ to a filtered $\Dmod$-module on the
tangent bundle; because $p$ is smooth, we have $\Mmodt = \pu \Mmod$ and 
$F_k \Mmodt = \pu F_k \Mmod$. Similarly define $\Nmodt$. 

\begin{lemma}
In the notation introduced above, we have
\[
	\It(\nu) = \menge{(x, \xi) \in TX}%
		{\text{$(\theta \cdot \sigmat)(x, \xi) = 0$ for some 
                       $\sigmat \in F_0 \Nmodt$ with $\sigmat \mapsto 1$}}.
\]
\end{lemma}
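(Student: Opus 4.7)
The plan is to establish the two inclusions by moving between local sections of $F_0 \Nmod$ on $X$ and local sections of $F_0 \Nmodt = \pu F_0 \Nmod$ on $TX$, using the pullback $\pu$ in one direction and a local section of $p$ in the other.

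For the inclusion $\It(\nu) \subseteq (\text{right-hand side})$, suppose $(x_0, \xi_0) \in \It(\nu)$, so that some local section $\sigma$ of $F_0 \Nmod$ near $x_0$ satisfies $\sigma \mapsto 1$ and $(\xi_0 \cdot \sigma)(x_0) = 0$. I would take $\sigmat = \pu \sigma$, a local section of $F_0 \Nmodt$ near $(x_0, \xi_0)$; naturality of pullback along $p$ applied to the surjection $F_0 \Nmod \to \OX$ gives $\sigmat \mapsto 1$. In local coordinates $(x_1, \dotsc, x_n, \xi_1, \dotsc, \xi_n)$ where $\theta = \xi_1 \, \partial/\partial x_1 + \dotsb + \xi_n \, \partial/\partial x_n$, the tautological action of $\theta$ on $\Nmodt$ yields $\theta \cdot \pu \sigma = \sum_{k} \xi_k \, \pu(\partial \sigma / \partial x_k)$, whose value at $(x_0, \xi_0)$ equals $(\xi_0 \cdot \sigma)(x_0) = 0$, as required.

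For the reverse inclusion, suppose $\sigmat$ is a local section of $F_0 \Nmodt$ near $(x_0, \xi_0)$ with $\sigmat \mapsto 1$ and $(\theta \cdot \sigmat)(x_0, \xi_0) = 0$. After choosing local holomorphic coordinates $x_1, \dotsc, x_n$ near $x_0$, the tangent bundle trivializes over a small open $U \ni x_0$, and the formula $s(x) = (x, \xi_0)$ defines a holomorphic local section $s \colon U \to TU$ of $p$. Since $p \circ s = \id$, pullback by $s$ satisfies $s^{\ast} F_0 \Nmodt = F_0 \Nmod$, so $\sigma = s^{\ast} \sigmat$ is a local section of $F_0 \Nmod$ near $x_0$ with $\sigma \mapsto 1$. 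In these coordinates $\sigma(x) = \sigmat(x, \xi_0)$, whence $(\partial \sigma / \partial x_k)(x_0) = (\partial \sigmat / \partial x_k)(x_0, \xi_0)$, and therefore $(\xi_0 \cdot \sigma)(x_0) = \sum_{k} (\xi_0)_k (\partial \sigmat / \partial x_k)(x_0, \xi_0) = (\theta \cdot \sigmat)(x_0, \xi_0) = 0$, placing $(x_0, \xi_0)$ in $\It(\nu)$.

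The argument is essentially local bookkeeping in coordinates, and I do not anticipate any substantial obstacle. The one point that deserves attention is the compatibility of the tautological field $\theta$ acting on $\pu \Nmod$ with both the pullback $\pu$ of sections of $\Nmod$ and with the pullback by the slice map $s$; this is a standard feature of $\Dmod$-module pullback along the smooth projection $p$, and both halves of the argument reduce to the single local identity $(\theta \cdot \pu \sigma)(x_0, \xi_0) = (\xi_0 \cdot \sigma)(x_0)$.
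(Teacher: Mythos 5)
Your proof is correct and takes essentially the same route as the paper: the easy inclusion is handled by pulling back $\sigma$ along $p$, and for the converse the paper writes $\sigmat = \sum_k f_k \cdot \pu \sigma_k$ (using $F_0 \Nmodt = \pu F_0 \Nmod$) and sets $\sigma = \sum_k f_k(\argbl, \xi) \sigma_k$, which is precisely your restriction $s^{\ast}\sigmat$ along the slice $x \mapsto (x, \xi_0)$. The local-coordinate identity $(\xi \cdot \sigma)(x) = (\theta \cdot \sigmat)(x, \xi)$ that you verify is the same ``brief calculation'' the paper invokes.
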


\begin{proof}
The set on the right-hand side clearly contains $\It(\nu)$. To prove that the 
two sets are equal, suppose that we have $(\theta \cdot \sigmat)(x, \xi) = 0$ 
for some holomorphic section $\sigmat$ of $F_0 \Nmodt$, defined in a 
neighborhood of the point $(x, \xi) \in TX$. Since 
$F_0 \Nmodt = \pu F_0 \Nmod$, we can write $\sigmat = \sum_k
f_k \cdot \pu \sigma_k$ for suitably chosen $f_k \in \shO_{TX}$ and 
$\sigma_k \in F_0 \Nmod$. Define $\sigma = \sum_k f_k(\argbl, \xi) \sigma_k$; 
then $\sigma \in F_0 \Nmod$ and $\sigma \mapsto 1$. A brief calculation in 
local coordinates shows that
\[
	(\xi \cdot \sigma)(x) = (\theta \cdot \sigmat)(x, \xi) = 0,
\]
and so we get $(x, \xi) \in \It(\nu)$ as desired.
\end{proof}

The next step is to show that $\It(\nu)$ is the zero locus of a holomorphic 
section of an analytic coherent sheaf on $TX$. Let $\shF$ denote the analytic 
coherent sheaf on $TX$ obtained by taking the quotient of $F_1 \Mmodt$ by the 
analytic coherent subsheaf generated by $\theta \cdot F_0 \Mmodt$. For any 
local holomorphic section
$\sigma \in F_0 \Nmodt$ with $\sigma \mapsto 1$, we have 
$\theta \cdot \sigma \in F_1 \Mmodt$, and the image of $\theta \cdot \sigma$ in
the quotient sheaf $\shF$ is independent of the choice of $\sigma$, due to the 
exactness of \eqref{eq:extension}. In this manner, we obtain a global 
holomorphic section $s$ of the sheaf $\shF$.

\begin{lemma} \label{lem:zero-locus}
$\It(\nu)$ is the zero locus of the section $s$ of the coherent sheaf $\shF$.
\end{lemma}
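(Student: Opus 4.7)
The plan is to verify the two inclusions separately, working in stalks and fibers of the coherent sheaf $\shF = F_1 \Mmodt / \shG$, where $\shG$ denotes the $\shO_{TX}$-submodule of $F_1 \Mmodt$ generated by the set $\theta \cdot F_0 \Mmodt$. Near any point $(x,\xi) \in TX$, the section $s$ is represented by $\theta \cdot \sigmat$ for any choice of local lift $\sigmat \in F_0 \Nmodt$ of the section $1 \in \shO_{TX}$, and this class in $\shF$ is independent of the choice because $F_0 \Mmodt = \ker(F_0 \Nmodt \to \shO_{TX})$, by flatness of $p$ applied to the exact sequence \eqref{eq:extension}.

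The inclusion $\It(\nu) \subseteq \{s = 0\}$ is immediate from the previous lemma. Given $(x,\xi) \in \It(\nu)$ with a witness $\sigmat$, represent $s$ near $(x,\xi)$ using this particular $\sigmat$; since $\theta \cdot \sigmat \in F_1 \Mmodt$ vanishes in the fiber at $(x,\xi)$ by the definition of $\It(\nu)$, its image in the fiber of the quotient $\shF$ also vanishes.

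For the reverse inclusion, fix any local lift $\sigmat_0 \in F_0 \Nmodt$ of $1$ near $(x,\xi)$ and suppose $s(x,\xi) = 0$. Unwinding the definition of $\shF$ at the level of stalks yields a germ identity
\[
\theta \cdot \sigmat_0 = \sum_k f_k \, (\theta \cdot \tau_k) + r
\]
with $f_k \in \shO_{TX,(x,\xi)}$, $\tau_k \in (F_0 \Mmodt)_{(x,\xi)}$, and $r \in \mm_{(x,\xi)} \, (F_1 \Mmodt)_{(x,\xi)}$. One cannot in general rewrite $\sum_k f_k (\theta \cdot \tau_k)$ as $\theta \cdot \tau$ for a single $\tau$, because $\theta$ is only $\CC$-linear, not $\shO_{TX}$-linear, on $\Mmodt$; this is the one genuine subtlety in the argument. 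The fix exploits that the condition defining $\It(\nu)$ is pointwise. Setting $c_k = f_k(x,\xi) \in \CC$ and $\delta = -\sum_k c_k \tau_k \in (F_0 \Mmodt)_{(x,\xi)}$, the $\CC$-linearity of $\theta$ gives $\theta \cdot \delta = -\sum_k c_k (\theta \cdot \tau_k)$, and then $\sigmat = \sigmat_0 + \delta$ is still a local lift of $1$ in $F_0 \Nmodt$ with
\[
(\theta \cdot \sigmat)(x,\xi) = (\theta \cdot \sigmat_0)(x,\xi) - \sum_k f_k(x,\xi) \, (\theta \cdot \tau_k)(x,\xi) = r(x,\xi) = 0
\]
in the fiber of $F_1 \Mmodt$. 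Hence $(x,\xi) \in \It(\nu)$, completing the proof, with the constant-coefficient truncation being the main step that circumvents the only real obstacle.
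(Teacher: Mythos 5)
Your proof is correct and follows essentially the same route as the paper's: the forward inclusion is immediate, and for the converse both arguments replace the holomorphic coefficients $f_k$ by their constant values at $(x,\xi)$ and use the $\CC$-linearity of the action of $\theta$ to absorb $-\sum_k f_k(x,\xi)\,\tau_k$ into the lift of $1$. Your write-up just makes the stalk/fiber bookkeeping and the non-$\shO_{TX}$-linearity of $\theta$ more explicit than the paper does.
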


\begin{proof}
If $(x, \xi) \in \It(\nu)$, then we have $(\theta \cdot \sigmat)(x, \xi) = 0$ 
for some choice of $\sigmat \in F_0 \Nmodt$ with $\sigma \mapsto 1$; in 
particular, $s(x, \xi) = 0$. Conversely, suppose that we have $s(x, \xi) = 0$ 
for some point $(x, \xi) \in TX$. By definition of $\shF$, we can then find 
local sections $\sigmat_k \in F_0 \Mmodt$ and local holomorphic functions 
$f_k \in \shO_{TX}$, such that
\[
	\theta \cdot \sigmat - \sum_k f_k \theta \cdot \sigmat_k
\]
vanishes at the point $(x, \xi)$. Set $a_k = f_k(x, \xi) \in \CC$; then
\[
	\theta \cdot \left( \sigmat - \sum_k a_i \sigmat_k \right) =
	\theta \cdot \sigmat - \sum_k a_k \theta \cdot \sigmat_k
\]
also vanishes at $(x, \xi)$, and this shows that $(x, \xi) \in \It(\nu)$.
\end{proof}

Despite the analytic definition, both $\shF$ and $s$ are actually algebraic 
objects.

\begin{lemma} \label{lem:algebraic}
$\shF$ is an algebraic coherent sheaf on $TX$, and $s \in \Gamma(TX, \shF)$ is an
algebraic global section.
\end{lemma}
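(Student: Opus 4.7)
The plan is to observe that every ingredient in the construction of $\shF$ and $s$ is algebraic, so the final objects are algebraic by GAGA-type reasoning. First I would collect the algebraic inputs from earlier in the section: the Hodge filtrations $F_k \Mmod$ and $F_k \Nmod$, together with the exact sequence \eqref{eq:extension}, are algebraic coherent sheaves and morphisms on $X$. Since $p \colon TX \to X$ is a smooth algebraic morphism, the pullbacks $\Mmodt$, $\Nmodt$, and in particular $F_k \Mmodt = \pu F_k \Mmod$ and $F_k \Nmodt = \pu F_k \Nmod$, are algebraic coherent sheaves on $TX$ with algebraic $\Dmod_{TX}$-module structures. The tautological section $\theta \in \Gamma(TX, \pu T_X)$ is algebraic by construction, since in local algebraic coordinates it is given by $\sum \xi_i \partial / \partial x_i$.

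Next I would give an algebraic description of the $\shO_{TX}$-coherent subsheaf of $F_1 \Mmodt$ generated by $\theta \cdot F_0 \Mmodt$. Although the map $\sigmat \mapsto \theta \cdot \sigmat$ is only $\CC$-linear on $F_0 \Mmodt$, the induced map into $F_1 \Mmodt / F_0 \Mmodt$ is genuinely $\shO_{TX}$-linear (the Leibniz correction terms $(\theta \cdot f)\sigmat$ land in $F_0 \Mmodt$), and it is the pullback via $p$ and evaluation at $\theta$ of the algebraic symbol map $T_X \tensor_{\OX} F_0 \Mmod \to F_1 \Mmod / F_0 \Mmod$. Working locally with algebraic generators $\sigma_1, \dotsc, \sigma_r$ of $F_0 \Mmod$, I would verify that the generated subsheaf equals $\shO_{TX} \cdot \{\theta \cdot \pu \sigma_i\}_i + I_0 \cdot F_0 \Mmodt$, where $I_0 \subset \shO_{TX}$ is the ideal of the zero section of $TX$; this description is manifestly algebraic coherent and independent of the choice of generators, and it commutes with analytification. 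Hence $\shF$ is the analytification of an algebraic coherent sheaf on $TX$.

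To construct $s$ as an algebraic global section, I would use that the surjection $F_0 \Nmod \onto \OX$ of algebraic coherent sheaves admits algebraic local lifts of the constant section $1 \in \OX$. For any such lift $\sigma \in F_0 \Nmod$, the element $\theta \cdot \pu \sigma$ is an algebraic local section of $F_1 \Nmodt$ whose image in $F_1 \OX = \OX$ equals $\theta \cdot 1 = 0$, so it already lies in $F_1 \Mmodt$. Its class in $\shF$ is independent of the choice of $\sigma$ by exactness of \eqref{eq:extension}, so these local algebraic classes patch to a global algebraic section of $\shF$ which coincides with $s$. The main obstacle will be pinning down the algebraic description of the subsheaf generated by $\theta \cdot F_0 \Mmodt$ and checking its compatibility with analytification; once that is done, the rest is routine.
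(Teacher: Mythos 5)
Your argument is correct and follows essentially the same route as the paper: algebraicity of $F_k\Mmod$, $F_k\Nmod$, the pullbacks along the smooth morphism $p$, and the tautological section $\theta$ give the algebraicity of $\shF$, and local algebraic lifts of $1$ through the surjection $F_0\Nmodt \to \shO_{TX}$ (using exactness of the pulled-back sequence) give the algebraicity of $s$. The only difference is that you spell out, via the explicit description $\shO_{TX}\cdot\{\theta\cdot \pu\sigma_i\} + I_0\cdot F_0\Mmodt$, why the $\shO_{TX}$-subsheaf generated by the merely $\CC$-linear image $\theta\cdot F_0\Mmodt$ is algebraic and compatible with analytification — a point the paper treats as immediate; your elaboration is accurate and harmless.
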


\begin{proof}
Each $F_k \Mmodt = \pu F_k \Mmod$ is an algebraic coherent sheaf on $TX$, and 
since the tautological section $\theta \in \Gamma(TX, \pu \shT_X)$ is clearly 
algebraic, it follows that $\shF$ is an algebraic coherent sheaf. To show that 
the global section $s \in \Gamma(TX, \shF)$ is algebraic, observe that we have an 
exact sequence of algebraic coherent sheaves
\[
	0 \to F_0 \Mmodt \to F_0 \Nmodt \to \shO_{TX} \to 0;
\]
indeed, \eqref{eq:extension} is exact, and $p \colon TX \to X$ is a smooth 
affine morphism. At every point $(x, \xi) \in TX$, we can therefore find an 
algebraic section $\sigma \in F_0 \Nmodt$, defined in a Zariski open 
neighborhood of $(x, \xi)$, such that $\sigma \mapsto 1$. This clearly implies 
that $s$, which is locally given by the image of $\theta \cdot \sigma$ in 
$\shF$, is itself algebraic.
\end{proof}
	
To prove Theorem~\ref{thm:main}, it is clearly sufficient to show that the set
$\It(\nu)$ is constructible in the Zariski topology on $TX$.
Lemma~\ref{lem:zero-locus} and Lemma~\ref{lem:algebraic} reduce the problem to 
the following general result in abstract algebraic geometry: On any algebraic 
variety, the zero locus of a section of a coherent sheaf is constructible (but 
not, in general, Zariski closed). This fact is certainly well-known, but since 
it was surprising to us at first, we have decided to include a simple proof in 
the following section.

\subsection{Zero loci of sections of coherent sheaves}

In this section, we carefully define the ``zero locus'' for sections of 
coherent sheaves, and show that it is always constructible in the Zariski
topology. This is obviously a local problem, and so it suffices to consider 
the case of affine varieties. Let $R$ be a commutative ring with unit; to avoid
technical complications, we shall also assume that $R$ is Noetherian. For any 
prime ideal $\pp \subseteq R$, we denote by the symbol
\[
	\kappa(\pp) = R_{\pp} / \pp R_{\pp}
\]
the residue field at $\pp$; it is isomorphic to the field of fractions of the 
local ring $R_{\pp}$. Let $X = \Spec R$ be the set of prime ideals of the ring 
$R$, endowed with the Zariski topology. For any ideal $I \subseteq R$, the set
\[
	V(I) = \menge{\pp \in X}{\pp \supseteq I}
\]
is closed in the Zariski topology on $X$, and any closed subset is of this
form; likewise, for any element $f \in R$, the set
\[
	D(f) = \menge{\pp \in X}{\pp \not\ni f}
\]
is an open subset, and these open sets form a basis for the Zariski topology. 

\begin{definition}
A subset of $X$ is called \define{constructible} if it is a finite union of
subsets of the form $D(f) \cap V(I)$. 
\end{definition}

Here is how this algebraic definition is related to constructibility on
complex algebraic varieties. Suppose that $R$ is a $\CC$-algebra of finite
type. Let $X(\CC)$ be the set of all maximal ideals of $R$, endowed with the
classical topology; it is an affine complex algebraic variety, and the inclusion
mapping $X(\CC) \into X$ is continuous.

\begin{definition}
A subset of $X(\CC)$ is called \define{constructible} (in the Zariski topology)
if it is the set of maximal ideals in a constructible subset of $X$.
\end{definition}

Any coherent sheaf on $X = \Spec R$ is uniquely determined by the finitely 
generated $R$-module of its global sections; conversely, any finitely generated
$R$-module $M$ defines a coherent sheaf on $X$, and hence by restriction to the
subset $X(\CC)$ an algebraic coherent sheaf $\shF_M$ on $X(\CC)$. Its fiber at 
the point corresponding to a maximal ideal $\mm \subseteq R$ is the 
finite-dimensional $\CC$-vector space
\[
	M \tensor_R \kappa(\mm) = M_{\mm} / \mm M_{\mm}.
\]
Similarly, any element $m \in M$ defines an algebraic global section $s_m$ of 
the sheaf $\shF_M$. Obviously, $s_m$ vanishes at the point corresponding to a
maximal ideal $\mm \subseteq R$ if and only if $m$ goes to zero in 
$M \tensor_R \kappa(\mm)$.  Thus if we define
\[ 
	Z(M, m) = \menge{\pp \in X}%
		{\text{$m$ goes to zero in $M \tensor_R \kappa(\pp)$}},
\]
then the zero locus of $s_m$ on $X(\CC)$ is precisely the set of maximal ideals
in $Z(M, m)$. Thus the desired result about zero loci of sections of coherent 
sheaves is a consequence of the following general theorem in commutative 
algebra.

\begin{proposition} \label{prop:zero-locus}
Let $R$ be a commutative Noetherian ring with unit. Then for any finitely generated
$R$-module $M$, and any $m \in M$, the set $Z(M,m)$ is constructible.
\end{proposition}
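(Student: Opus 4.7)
The plan is to prove Proposition~\ref{prop:zero-locus} by Noetherian induction on the irreducible closed subsets of $\Spec R$. Suppose for contradiction that $Z(M,m)$ is not constructible, and choose an irreducible closed subset $V(\mathfrak{q}) \subseteq \Spec R$ that is \emph{minimal} among those for which $Z(M,m) \cap V(\mathfrak{q})$ fails to be constructible (such a minimum exists since $R$ is Noetherian). For every prime $\mathfrak{p} \supseteq \mathfrak{q}$ one has $M \tensor_R \kappa(\mathfrak{p}) = (M/\mathfrak{q}M) \tensor_{R/\mathfrak{q}} \kappa(\mathfrak{p}/\mathfrak{q})$, so after replacing $R$ by $R/\mathfrak{q}$ and $M$ by $M/\mathfrak{q}M$, I may assume that $R$ is a Noetherian \emph{domain} with $\mathfrak{q} = (0)$, while $Z(M,m) \cap V(\mathfrak{p})$ is constructible for every nonzero prime $\mathfrak{p}$. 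Let $K$ denote the fraction field of $R$.

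Next I would split into two cases according to the image $\bar{m}$ of $m$ in $M_K = M \tensor_R K$. If $\bar{m} = 0$, there exists $s \in R \setminus \{0\}$ with $sm = 0$, and for every $\mathfrak{p} \in D(s)$ the element $s$ is invertible in $\kappa(\mathfrak{p})$, forcing $m \tensor 1 = 0$ in $M \tensor \kappa(\mathfrak{p})$; hence $D(s) \subseteq Z(M,m)$. If $\bar{m} \neq 0$, I would apply generic freeness for finitely generated modules over a Noetherian domain: there exists $s \in R \setminus \{0\}$ such that $M_s$ is free over $R_s$, and after enlarging $s$ once more (multiplying by a nonzero coefficient of $m$ in a chosen basis of $M_s$) one can arrange that $m$ has an \emph{invertible} coefficient with respect to that basis. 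Then $m \tensor 1 \neq 0$ in $M \tensor \kappa(\mathfrak{p})$ for every $\mathfrak{p} \in D(s)$, i.e., $Z(M,m) \cap D(s) = \emptyset$.

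In either case, $Z(M,m) \cap D(s)$ is equal to $D(s)$ or to $\emptyset$, so it is constructible, while the complementary piece $Z(M,m) \cap V(s)$ is contained in the \emph{proper} closed subset $V(s) \subsetneq \Spec R$. Since $R$ is Noetherian, $V(s)$ decomposes into finitely many irreducible components $V(\mathfrak{p}_1), \dotsc, V(\mathfrak{p}_k)$, each with $\mathfrak{p}_i \supsetneq (0)$. By the minimality of $V(\mathfrak{q}) = V(0)$, every intersection $Z(M,m) \cap V(\mathfrak{p}_i)$ is constructible, and so is their finite union. Combining the two pieces yields the constructibility of $Z(M,m)$ itself, contradicting the choice of $\mathfrak{q}$.

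The main obstacle is the second case, where one needs to convert the nonvanishing of $\bar{m}$ in the generic fibre $M_K$ into the existence of a single $s \in R \setminus \{0\}$ for which $m$ is, up to units, part of a basis of $M_s$; this is exactly where generic freeness over a Noetherian domain is used. Once that input is in place, everything else is a routine manipulation of basic Zariski opens and closeds together with the inductive hypothesis.
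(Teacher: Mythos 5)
Your argument is correct, but it takes a genuinely different route from the paper. The paper's proof is explicit and determinantal: it fixes a presentation $R^{\oplus q} \xrightarrow{A} R^{\oplus p} \onto M$, lifts $m$ to $y \in R^{\oplus p}$, observes that $\mathfrak{p} \in Z(M,m)$ iff $y = Ax$ is solvable over $\kappa(\mathfrak{p})$, and then stratifies $\Spec R$ by the size of the nonvanishing minors of $A$; on each stratum $D(f) \cap V(I)$ the solvability locus is cut out by an explicit ideal $J$, so $Z(M,m)$ is exhibited as a concrete finite union of sets $D(f)\cap V(I+J)$. Your proof instead runs a Noetherian induction: reduce to a domain via a minimal irreducible closed subset where constructibility fails, split on whether $m$ dies in $M \tensor_R K$, and in the nontrivial case invoke generic freeness to find $s \neq 0$ with $M_s$ free and $m$ a unit multiple of part of a basis, so that $Z(M,m) \cap D(s)$ is either all of $D(s)$ or empty; the remainder lives in the proper closed set $V(s)$ and is handled by the inductive hypothesis. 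Both arguments are complete. What the paper's version buys is effectivity --- explicit defining equations for each stratum --- which is also how the authors reuse the \emph{proof} (not just the statement) in \S 2.1 to handle the solvability of the linear system in $\varphi_i(x)$ and $\partial\varphi_i/\partial x_k(x)$; with your argument one would instead apply the proposition as a black box to the cokernel of that coefficient matrix, which works equally well. Your version buys brevity and conceptual clarity at the cost of being nonconstructive (generic freeness does not tell you which $s$ to take). One small point of care, which you essentially handle correctly: the minimal closed subset on which constructibility fails is automatically irreducible, since otherwise its proper irreducible components would each meet $Z(M,m)$ in a constructible set and so would their finite union.
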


\begin{proof}
We are going to construct a finite covering
\[
	\Spec R = \bigcup_{k=1}^n D(f_k) \cap V(I_k)
\]
with $f_1, \dotsc, f_n \in R$ and $I_1, \dotsc, I_n \subseteq R$, such that
for every $k=1, \dotsc, n$, one has
\[
	Z(M, m) \cap D(f_k) \cap V(I_k) = D(f_k) \cap V(I_k + J_k),
\]
for a certain ideal $J_k \subseteq R$. This is sufficient, because it implies that
\[
	Z(M, m) = \bigcup_{k=1}^n D(f_k) \cap V(I_k + J_k)
\]
is a constructible subset of $\Spec R$.

Since $M$ is finitely generated and $R$ is Noetherian, we may find a 
presentation
\begin{equation} \label{eq:presentation}
	R^{\oplus q} \xrightarrow{\,A\,} R^{\oplus p} \onto M,
\end{equation}
in which $A$ is a $p \times q$-matrix with entries in $R$. Let 
$y \in R^{\oplus p}$ be any vector mapping to $m \in M$. Then $Z(M, m)$ is the \
set of $\pp \in \Spec R$ such that the equation $y = A x$ has a solution over 
the field $\kappa(\pp)$. 

We construct the desired covering of $\Spec R$ by looking at all possible minors of
the matrix $A$. Fix an integer $0 \leq \ell \leq \min(p,q)$ and an $\ell \times
\ell$-submatrix of $A$; to simplify the notation, let us assume that it is the $\ell
\times \ell$-submatrix in the upper left corner of $A$. Let $f$ be the determinant of
the submatrix, and let $I$ be the ideal generated by all minors of $A$ of size
$(\ell+1) \times (\ell+1)$; if $\ell = 0$, we set $f = 1$, and if $\ell = \min(p,q)$,
we set $I = 0$. We can then make a coordinate change in $R^{\oplus q}$, invertible
over the localization $R_f = R \lbrack f^{-1} \rbrack$, and arrange that
\[
	A = \begin{pmatrix}
		f & 0 & \cdots & 0 & 0 & \cdots & 0 \\
		0 & f & \cdots & 0 & 0 & \cdots & 0 \\
		\vdots & \vdots & & \vdots & \vdots & & \vdots \\
		0 & 0 & \cdots & f & 0 & \cdots & 0 \\
		a_{\ell+1,1} & a_{\ell+1,2} & \cdots & a_{\ell+1, \ell} 
			& a_{\ell+1,\ell+1} & \cdots & a_{\ell+1,q} \\
		\vdots & \vdots & & \vdots & \vdots & & \vdots \\
		a_{p,1} & a_{p,2} & \cdots & a_{p, \ell} 
			& a_{p,\ell+1} & \cdots & a_{p,q} 
	\end{pmatrix} \in \Mat_{p \times q}(R).
\]
Now let $J \subseteq R$ be the ideal generated by the elements
\[
	f y_i - \sum_{j=1}^{\ell} a_{i,j} y_j
\]
for $i = \ell+1, \dotsc, p$. Then we have 
\[
	Z(M, m) \cap D(f) \cap V(I) = D(f) \cap V(I + J).
\]
Indeed, suppose that $\pp$ is any prime ideal with $f \not\in \pp$ and $I \subseteq
\pp$. Since $a_{i,j} \in \pp$ for every $\ell+1 \leq i \leq p$ and $\ell+1 \leq j \leq q$,
the equation $y = Ax$ reduces over the field $\kappa(\pp)$ to the equations $y_i = f
x_i$ for $i=1, \dotsc, \ell$, and 
\[
	y_i = \sum_{j=1}^{\ell} a_{i,j} x_j
\]
for $i = \ell+1, \dotsc, p$; they are obviously satisfied if and only if $J
\subseteq \pp$.

We now obtain the assertion by applying the above construction of $f$, $I$, and
$J$ to all possible $\ell \times \ell$-submatrices of $A$.
\end{proof}

Here is a simple example to show that, when the coherent sheaf is not locally 
free, the zero locus of a section need not be Zariski closed.

\begin{example}
Let $R = \CC \lbrack x, y \rbrack$, let $M$ be the ideal of $R$ generated by 
$x,y$, and let $m = x$. Then $M$ has a free resolution of the form 
$R \to R^{\oplus 2}$, and $\pp \in Z(M,m)$ if and only if the equations 
$1 + yf = 0$ and $xf = 0$ have a common solution $f \in \kappa(\pp)$. A simple 
computation now shows that
\[
    Z(M,m) = \menge{\pp \in \Spec R}{\text{$x \in \pp$ and $y \not\in \pp$}}.
\]
As a subset of $\CC^2$, the zero locus consists of the $y$-axis minus the 
origin; it is constructible, but not Zariski closed.
\end{example}

\section{Relation to algebraic cycles}
\label{sec:GG}

\def\C{\mathbb C}
\def\a{\alpha}
\def\b{\beta}
\def\g{\gamma}

\subsection{Green-Griffiths Program} Our interest in the algebraicity of 
$I(\nu)$ is motivated in part by the program~\cite{GG} of Green and Griffiths 
to study the Hodge conjecture via singularities of normal functions.  More 
precisely, given a smooth complex projective variety $X$, a very ample line 
bundle $L\to X$ and a non-torsion, primitive Hodge class $\zeta$ of type 
$(n,n)$ on $X$, Griffiths and Green construct an admissible normal function
$$
        \nu_{\zeta}:P-\hat X\to J(\mathcal H)
$$
on the complement of the dual variety $\hat X$ in 
$P=\mathbb P H^0(X,\mathcal O(L))$.  At each point $\hat x\in\hat X$, the
cohomology class of $\nu_{\zeta}$ localizes to an invariant
$$
      \sing_{\hat x}(\nu_{\zeta}) \in \IH_{\hat{x}}^1(\mathcal H)
$$
called the \define{singularity} of $\nu_{\zeta}$ at $\hat x$.  A normal function
$\nu_{\zeta}$ is said to be \define{singular} if there is a point $\hat x\in\hat X$
at which $\sing_{\hat x}(\nu_{\zeta})$ is non-torsion.

\begin{conjecture}\label{conj:main} Let $(X,L,\zeta)$ be as above.  Then, 
there exists an integer $k>0$ such that after replacing $L$ by $L^k$, the 
associated normal function $\nu_{\zeta}$ is singular.
\end{conjecture}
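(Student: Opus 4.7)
The plan is to reduce Conjecture~\ref{conj:main} to the Hodge conjecture for $(X,\zeta)$ via the Green--Griffiths dictionary between singularities of $\nu_{\zeta}$ and algebraic representatives of $\zeta$, and to use the constructibility of $I(\nu_{\zeta})$ from Theorem~\ref{thm:main} as the effective tool for locating those singularities. For $L$ sufficiently ample, the universal family of smooth hyperplane sections over $P - \hat X$ carries the weight $-1$ polarized VHS $\shH$, and $\nu_{\zeta}$ is the admissible horizontal section of $J(\shH)$ whose cohomology class matches $\zeta$ under the Leray/Lefschetz isomorphism identifying the primitive part of $H^{2n}(X,\ZZ)$ with a subgroup of $H^{1}(P - \hat X, R^{2n-1}\pi_{\ast} \ZZ)$ modulo torsion. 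Admissibility of $\nu_{\zeta}$ is standard in this setup, so Theorem~\ref{thm:main} applies and gives constructibility of $I(\nu_{\zeta}) \subseteq T(P - \hat X)$ for every power $L^{k}$.

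First I would use the nilpotent orbit theorem on a generic normal slice to $\hat X$ at a nodal point $\hat x$ to express $\sing_{\hat x}(\nu_{\zeta})$ in terms of the residue of $\nabla \nut$ against the vanishing cycle attached to $\hat x$. Second, translating this data back to the section $s$ of the coherent sheaf $\shF$ from Lemma~\ref{lem:zero-locus}, a non-torsion singularity at $\hat x$ becomes a non-vanishing condition for (a boundary analogue of) $s$ in the normal directions to a constructible stratum of $\hat X$. The constructibility provided by Theorem~\ref{thm:main} then forces a dichotomy: either some stratum of $\hat X$ admits a Zariski open subset along which the residue is non-torsion (producing the desired singularity), or the residue is torsion on every stratum, in which case $\nu_{\zeta}$ is topologically trivial and its cohomology class is torsion. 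Third, one would rule out the second alternative by asymptotic positivity of $L^{k}$, for example by combining injectivity of the Kodaira--Spencer map with surjectivity of Macaulay-type multiplication maps on the Jacobian ring, so that for $k \gg 0$ the cohomology class of $\nu_{\zeta}$ inherits the non-torsion class of $\zeta$.

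The main obstacle is precisely the third step: one must rule out, for arbitrarily large $k$, that the primitive class $\zeta$ remains invisible to $\nu_{\zeta}$. Because Green and Griffiths prove in \cite{GG} that their conjecture is \emph{equivalent} to the Hodge conjecture for the class $\zeta$, a complete resolution for general $(X,L,\zeta)$ would settle the Hodge conjecture in the relevant range, and no purely Hodge-theoretic argument for this step is known. A realistic intermediate target is therefore to verify Conjecture~\ref{conj:main} in cases where the Hodge conjecture is already available---divisor classes via Lefschetz $(1,1)$, classes of Weil type on abelian varieties, and certain Calabi--Yau threefolds---and to treat Theorem~\ref{thm:main} as a computational device: on each constructible stratum of $\hat X$, the search for a non-torsion singularity reduces to a finite algebraic problem determined by the coherent sheaf $\shF$ and its global section $s$.
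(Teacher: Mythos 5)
The statement you are trying to prove is Conjecture~\ref{conj:main}; the paper does not prove it and does not claim to. Theorem~\ref{thm:GG} (quoting \cite{GG}, \cite{BFNP}, \cite{dCM}) records that the conjecture, taken over all even-dimensional $X$ and all non-torsion primitive middle-dimensional Hodge classes, is \emph{equivalent} to the Hodge conjecture. You acknowledge this yourself in your third step, and that acknowledgment is fatal to the proposal as a proof: what you have written is a strategy whose final step is, by your own admission, the Hodge conjecture in the relevant range. There is no way to ``rule out the second alternative by asymptotic positivity of $L^k$'' using Kodaira--Spencer injectivity or Macaulay-type surjectivity on Jacobian rings; if such an unconditional argument existed, it would prove the Hodge conjecture. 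So the proposal is not a proof, and cannot be completed by known methods.

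Beyond that, the first two steps also overreach what the paper's results give you. Theorem~\ref{thm:main} asserts only that $I(\nu_{\zeta})$ is Zariski constructible inside $T(P-\hat{X})$; it carries no information about the boundary behavior of $\nu_{\zeta}$ along $\hat{X}$, and in particular does not produce the ``dichotomy'' you describe. The singularity $\sing_{\hat{x}}(\nu_{\zeta})$ lives in a local intersection cohomology group at a point of $\hat{X}$, whereas $I(\nu_{\zeta})$ and the section $s$ of Lemma~\ref{lem:zero-locus} are defined on the open part (or its extension $\tilde{I}(\nu)$ to $TX$ after resolution); the paper explicitly poses as an open \emph{Question} whether there is even an analog of Proposition~\ref{prop:schnell} for $I(\nu_{\zeta})$, i.e.\ whether the infinitesimal invariant's zero locus can be used to detect singular points on $\hat{X}$. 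Your step two assumes a positive answer to that question. Finally, the implication ``residue torsion on every stratum $\Rightarrow$ the cohomology class of $\nu_{\zeta}$ is torsion'' is not elementary: relating the collection of local singularities to the global class $\zeta$ is precisely the content of the hard direction of \cite{BFNP} and \cite{dCM}, and it runs in the direction opposite to the one you need. A correct write-up should present Conjecture~\ref{conj:main} as open, cite Theorem~\ref{thm:GG} for its equivalence with the Hodge conjecture, and at most describe Theorem~\ref{thm:main} as a constructibility tool that might eventually feed into the Green--Griffiths program.
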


\begin{theorem}\label{thm:GG}\cites{GG,BFNP,dCM} Conjecture \eqref{conj:main}
holds (for every even dimensional $X$ and every non-torsion, primitive
middle dimensional Hodge class $\zeta$) if and only if the Hodge conjecture
holds (for all smooth projective varieties).
\end{theorem}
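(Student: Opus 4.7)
The plan is to prove Theorem~\ref{thm:GG} by establishing the two implications separately. The forward direction, that the Hodge conjecture implies Conjecture~\ref{conj:main}, is essentially the original observation of Green--Griffiths and should follow from a careful application of the local invariant cycle theorem. The reverse direction, recovering algebraic cycles from nontorsion singularities, is considerably deeper and where most of the technical work resides; here the natural tool is Saito's decomposition theorem for polarizable Hodge modules.

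For the forward direction, assume the Hodge conjecture and let $(X, L, \zeta)$ be as in the statement, with $\dim X = 2n$. Represent $\zeta$ by an algebraic cycle. After replacing $L$ by $L^k$ for $k$ sufficiently large, a Lefschetz-type argument ensures that over the smooth locus of $\hat X$ the fibers of the universal hyperplane section family $\famX \to P$ acquire a single ordinary double point with well-defined vanishing cycle $\delta_{\hat x}$, and that these vanishing cycles span the primitive middle cohomology of the generic hyperplane section. The local invariant cycle theorem then identifies $\sing_{\hat x}(\nu_{\zeta})$ with the pairing of $\zeta$ against $\delta_{\hat x}$. Since no primitive middle class can pair trivially with every vanishing cycle, at least one such pairing is nonzero, yielding a nontorsion singularity.

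For the reverse direction, assume Conjecture~\ref{conj:main}. By the standard reduction via hard Lefschetz and iterated hyperplane sections, it suffices to treat primitive Hodge classes of type $(n,n)$ on even-dimensional $X$. Form $\nu_{\zeta}$ after passing to $L^k$; the hypothesis gives some $\hat x \in \hat X$ at which $\sing_{\hat x}(\nu_{\zeta}) \in \IH^1_{\hat x}(\mathcal H)$ is nontorsion. Apply Saito's decomposition theorem to (a resolution of) the universal family $\famX \to P$: the local intersection cohomology class representing the singularity lies in a specific direct summand of the decomposition supported on a proper subvariety $Z \subseteq P$. By induction on dimension and on the residual rank of the Hodge classes not yet known to be algebraic, that summand is controlled by algebraic cycles on the support $Z$. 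Pushing forward along the incidence correspondence produces an algebraic class on $X$ pairing nontrivially with $\zeta$; an orthogonality argument using the polarization on primitive cohomology then lets the induction close, showing $\zeta$ is itself algebraic.

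The hard part is the reverse direction, and specifically the step translating the nontorsion singularity $\sing_{\hat x}(\nu_{\zeta})$ into an honest algebraic cycle on $X$. This requires matching the extension class in $\IH^1_{\hat x}(\mathcal H)$ with a summand in the decomposition theorem for $\famX \to P$ that can be identified as coming from an algebraic subvariety, and simultaneously controlling the Hodge filtration on these local intersection cohomology groups so that the inductive hypothesis on lower-dimensional varieties can be applied. This Hodge-theoretic bookkeeping between singularities of admissible normal functions and summands of Saito's decomposition is the technical heart of the BFNP/dCM argument.
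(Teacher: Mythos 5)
The paper itself offers no proof of Theorem~\ref{thm:GG}; it is quoted with citations to \cite{GG}, \cite{BFNP} and \cite{dCM}, so your proposal has to be measured against those arguments. Your reverse direction is a reasonable, if very compressed, paraphrase of the BFNP/de Cataldo--Migliorini strategy (decomposition theorem, supports, induction on dimension), although the final step as written --- producing an algebraic class that merely \emph{pairs nontrivially} with $\zeta$ --- does not by itself show $\zeta$ is algebraic; the actual argument shows that a nontorsion singularity forces $\zeta$ into the image of a Gysin map from (a resolution of) a proper subvariety, where the inductive hypothesis applies.

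The forward direction, however, fails as written, for three concrete reasons. First, there is a degree mismatch: $\zeta$ lives in $H^{2n}(X)$, while the vanishing cycle $\delta_{\hat x}$ of an ordinary double point of a hyperplane section lives in $H^{2n-1}$ of the nearby smooth fiber, so ``the pairing of $\zeta$ against $\delta_{\hat x}$'' is not defined. Second, and more fundamentally, at a point $\hat x$ of the \emph{smooth} locus of $\hat X$ (a one-nodal degeneration) the singularity $\sing_{\hat x}(\nu_{\zeta})$ is always torsion; this is the basic observation of Green--Griffiths that singularities of $\nu_{\zeta}$ occur only in codimension at least $2$ in $P$, i.e.\ only along the singular locus of $\hat X$, which is why the invariant takes values in the local intersection cohomology $\IH^1_{\hat x}(\mathcal H)$ in the first place. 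Nodal sections over the smooth locus of $\hat X$ can never produce a nontorsion singularity. Third, your argument never actually uses the algebraic cycle representing $\zeta$ after invoking the Hodge conjecture --- it would apply verbatim to any nonzero primitive Hodge class --- so if it were correct it would establish Conjecture~\ref{conj:main} unconditionally and hence, via the reverse direction, the Hodge conjecture itself. The genuine argument takes $\zeta=[W]$, passes to $L^k$ so that the linear system contains sections vanishing on the support of $W$, and evaluates $\sing_{\hat x}(\nu_{\zeta})$ at such highly singular $\hat x$ using the identification of the singularity with the image of $\zeta$ in (a graded piece of) the cohomology of the singular fiber; it is there that algebraicity and primitivity of $\zeta$ are actually used to show the singularity is nonzero.
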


\par Now, as explained in part III of~\cite{GG} one can also define a
notion $\sing_{\hat x}(\delta\nu_{\zeta})$ of the singularities of infinitesimal 
invariant $\delta\nu_{\zeta}$ of $\nu_{\zeta}$.  Moreover,
$$
      \sing_{\hat x}(\delta\nu_{\zeta}) = \sing_{\hat x}(\nu_{\zeta})
$$
for $L\gg 0$.  As a first attempt at constructing points at which
$\nu_{\zeta}$ is singular, observe that 
$$
      Z(\nu_{\zeta}) = \{\, p\in P-\hat X \mid \nu_{\zeta}(p) = 0\,\}
$$
is an analytic subset of $P-\hat X$, and hence it is natural to ask
if its closure is an algebraic subvariety of $P$ which intersects
$\hat X$ at some point where $\nu_{\zeta}$ is singular.  An affirmative answer 
is provided by the following two results:

\begin{theorem}\label{thm:alg-zero}\cites{BP,KNU,Sch2} If $S$ is a smooth 
complex algebraic variety and $\nu:S\to J(\mathcal H)$ is an admissible normal
function then $Z(\nu)$ is an algebraic subvariety of $S$.
\end{theorem}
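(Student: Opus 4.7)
The plan is to reduce to a smooth projective compactification of $S$ and then invoke GAGA, using admissibility of $\nu$ to control the boundary behaviour of $Z(\nu)$.

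By resolution of singularities, we may choose a smooth projective variety $X$ containing $S$ as a Zariski open subset with complement $D = X \setminus S$ a normal crossings divisor; admissibility of $\nu$ with respect to this compactification is part of the hypothesis. Over $S$ itself, the intermediate Jacobian bundle $J(\shH) \to S$ is a separated complex analytic group space, so its zero section is closed, and hence $Z(\nu) = \nu^{-1}(0)$ is automatically a closed analytic subset of $S$. The substance of the theorem is to upgrade this from analytic to algebraic.

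The main step is to show that the analytic closure $\overline{Z(\nu)}$ inside $X$ is a closed analytic subset of all of $X$: that is, that $Z(\nu)$ does not accumulate uncontrollably along $D$. For this one uses an analytic N\'eron model $J_X \to X$ extending $J(\shH) \to S$, constructed in the works cited in the statement of the theorem. The two properties needed are (i) any admissible normal function $\nu$ on $S$ extends uniquely to a holomorphic section $\bar\nu$ of $J_X$ over all of $X$, and (ii) the zero section of $J_X$ is closed in the analytic topology. Granting these,
\[
    \overline{Z(\nu)} = \bar\nu^{-1}(0)
\]
is closed analytic in $X$. Since $X$ is projective, Chow's theorem then identifies $\overline{Z(\nu)}$ with a Zariski closed algebraic subvariety of $X$, and intersecting with the Zariski open $S = X \setminus D$ yields $Z(\nu)$ as an algebraic subvariety of $S$.

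The hard part is the construction of the N\'eron model and the verification of property (i): over a simply connected open $U \subseteq S$ with a local lift $\tilde\nu \in \Gamma(U, \shHO)$, the locus $Z(\nu) \cap U$ is a priori a \emph{countable} union of analytic subvarieties, one for each integer class $\tau \in \shHZ$ with $\tilde\nu - \tau \in F^0 \shHO$, and one must show that only finitely many such $\tau$ are relevant near the boundary. This forced finiteness is precisely where admissibility enters, and it is established via a careful asymptotic analysis of admissible variations of mixed Hodge structure near $D$, using either the several-variable $\SL_2$-orbit theorem or a mixed Hodge module approach in the spirit of Section~\ref{sec:proof}. Once that machinery is in place, the passage from analytic closedness to algebraicity is immediate via GAGA.
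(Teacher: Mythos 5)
The paper does not prove this theorem at all: it is quoted as a known result, with the proof deferred to the cited references \cite{BP}, \cite{KNU}, \cite{Sch2}. So there is no internal proof to compare against; what can be said is how your outline relates to the arguments in those sources. Your architecture is the right one, and it is essentially the route taken in \cite{Sch2} (and, in different technical language, in \cite{BP} and \cite{KNU}): compactify $S$ to a smooth projective $X$ with normal crossing boundary, show that the closure of $Z(\nu)$ in $X$ is a closed analytic subset, apply Chow's theorem, and intersect back with $S$. Your identification of where admissibility must enter --- locally $Z(\nu)$ is a countable union of translates indexed by integral classes $\tau \in \shHZ$, and one must show only finitely many translates survive near the boundary --- is exactly the crux, and the N\'eron-model formulation via properties (i) and (ii) is precisely the packaging of \cite{Sch2}.

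That said, as a self-contained proof your proposal has a genuine gap, and you acknowledge it yourself: properties (i) and (ii) of the N\'eron model, or equivalently the forced finiteness of the relevant integral translates along $D$, constitute essentially the entire content of the cited papers. Nothing in your write-up actually establishes them; the appeal to ``a careful asymptotic analysis \dots using the several-variable $\SL_2$-orbit theorem or a mixed Hodge module approach'' is a pointer to the literature, not an argument. The surrounding reductions (closedness of $Z(\nu)$ in $S$, the passage from analytic closedness in $X$ to algebraicity via Chow, the recovery of $Z(\nu)$ as $\overline{Z(\nu)} \cap S$) are all correct and routine. So the assessment is: correct and faithful as an outline of the known proofs, but the decisive analytic input is asserted rather than proved, which is acceptable only if the theorem is being treated --- as this paper treats it --- as a citation.
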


\begin{proposition}\label{prop:schnell}\cite{Sch1} Let $\nu_{\zeta}$ be the 
normal function on $P\setminus\hat X$, associated to a non-torsion primitive 
Hodge class $\zeta\in H^{2n}(X,\mathbb Z)\cap H^{n,n}(X)$. Assume that 
$Z(\nu_{\zeta})$ contains an algebraic curve $C$, and that $P = |L^d|$ for $L$ 
very ample and $d\geq 3$.  Then $\nu_{\zeta}$ is singular at one of the points 
where the closure of $C$ meets $\hat X$.
\end{proposition}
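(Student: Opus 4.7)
The plan is to argue by contrapositive: assume $\sing_{\hat x}(\nu_\zeta)$ is torsion at every $\hat x \in \bar C \cap \hat X$, and show that $\zeta$ must then be torsion, contradicting the hypothesis.

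First I would reduce the problem to a smooth proper one-parameter base. Let $\pi \colon \tilde C \to \bar C \subseteq P$ be a resolution with $\tilde C$ smooth and projective, and set $\Sigma = \pi^{-1}(\bar C \cap \hat X)$, a finite subset. The admissible normal function $\nu_\zeta$ pulls back to an admissible normal function $\tilde\nu$ on $\tilde C \setminus \Sigma$, and the hypothesis $C \subseteq Z(\nu_\zeta)$ forces $\tilde\nu$ to vanish identically. At each $p \in \Sigma$ mapping to $\hat x$, the singularity $\sing_p(\tilde\nu)$ is the image of $\sing_{\hat x}(\nu_\zeta)$ under the natural restriction map $\IH^1_{\hat x}(\mathcal H) \to \IH^1_p(\pi^{\ast}\mathcal H)$; by assumption each of these images is torsion.

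Next I would extract a single cohomological invariant on $\tilde C$. Because $\tilde\nu$ vanishes on $\tilde C \setminus \Sigma$ and all its boundary singularities are torsion, its associated class in the rationalized Deligne-cohomological group $H^1\bigl(\tilde C, j_{\ast} \pi^{\ast}\mathcal H_{\ZZ}\bigr) \otimes \QQ$ vanishes. On the other hand, by construction this class is precisely the image of $\zeta$ under the composition of the classical realization of $\nu_\zeta$ (restriction to the universal hypersurface in $X$, primitive decomposition, Leray edge map for the family over $P - \hat X$) with the pullback to $\tilde C$. In other words, there is a natural map
\[
    \Phi \colon H^{2n}(X, \QQ)_{\mathrm{prim}} \longrightarrow H^1\bigl(\tilde C, j_{\ast}\pi^{\ast}\mathcal H_{\QQ}\bigr),
\]
and the content of the previous step is that $\Phi(\zeta) = 0$.

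The main obstacle, and where the hypothesis $d \geq 3$ enters, is proving that $\Phi$ is injective on the subspace of Hodge classes. The expected mechanism is a Nori-type connectivity or Green--Voisin-style Jacobian-ring argument: for $L^d$ with $d \geq 3$, the cup-product/Koszul maps governing the differential of the universal Abel--Jacobi map are non-degenerate on $H^{n,n}(X)_{\mathrm{prim}}$, and consequently the induced period map on any sufficiently non-degenerate curve $\tilde C$ in $P$ detects primitive Hodge classes faithfully. Once this Hodge-theoretic input is in place, $\Phi(\zeta) = 0$ forces $\zeta$ to be torsion, contradicting the hypothesis and completing the proof.
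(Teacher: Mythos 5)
There is a genuine gap, and it is structural. Note first that the paper does not prove this proposition; it quotes it from \cite{Sch1}, so the comparison below is with the argument given there. Your contrapositive hypothesis --- that $\sing_{\hat x}(\nu_{\zeta})$ is torsion at every point of $\bar C\cap\hat X$ --- is never actually used in your argument. Since $C\subseteq Z(\nu_{\zeta})$, the pulled-back normal function $\tilde\nu$ on $\tilde C\setminus\Sigma$ is identically zero, so its class in $H^1(\tilde C\setminus\Sigma,\pi^{*}\mathcal H_{\ZZ})$, and a fortiori in $H^1(\tilde C, j_{*}\pi^{*}\mathcal H_{\ZZ})\otimes\QQ$, vanishes for free; the boundary singularities play no role in that conclusion. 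Consequently, what you are really attempting to prove is the unconditional statement ``$\Phi(\zeta)=0$ forces $\zeta$ torsion, hence $Z(\nu_{\zeta})$ contains no algebraic curve at all.'' That is far stronger than the proposition (which would then be vacuous), and it cannot be the right shape of argument: the group $H^1(\tilde C, j_{*}\pi^{*}\mathcal H_{\QQ})$ by construction forgets exactly the local contributions at $\Sigma$ in which the singularities live, and the entire content of the proposition is that the information about $\zeta$ lost over $\tilde C\setminus\Sigma$ reappears in those boundary terms.

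For the same reason, the injectivity of $\Phi$ on primitive Hodge classes is not a technical obstacle to be overcome but a false hope: $C$ is an arbitrary curve contained in $Z(\nu_{\zeta})$, so it is special by definition, and Nori-type connectivity or Jacobian-ring nondegeneracy statements apply to the universal family (or to bases dominating it), not to the restriction to an arbitrary curve in $P$. The proof in \cite{Sch1} is organized differently. One passes to the total space $\mathscr Y\to\tilde C$ of the pulled-back incidence family (and a resolution $\hat{\mathscr Y}$); the projection $q\colon\hat{\mathscr Y}\to X$ is surjective because a projective curve in $P$ meets every hyperplane $\{\sigma:\sigma(x)=0\}$, so $q^{*}\zeta\neq 0$ in $H^{2n}(\hat{\mathscr Y},\QQ)$. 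Over $U=\tilde C\setminus\Sigma$ the three Leray graded pieces of $q^{*}\zeta$ all vanish: the $H^0(U,R^{2n})$ piece by primitivity of $\zeta$, the $H^1(U,R^{2n-1})$ piece because it is the topological invariant of $\tilde\nu=0$, and the $H^2(U,R^{2n-2})$ piece because $U$ is an affine curve. Hence $q^{*}\zeta$ is supported on the fibers over $\Sigma$, and the hypothesis $d\geq 3$, together with the identification of singularities of $\nu_{\zeta}$ with these local contributions (\cite{BFNP}, \cite{dCM}), shows that a nonzero boundary contribution forces a non-torsion singularity at some point of $\bar C\cap\hat X$. It is precisely at this last step that the torsion-singularity hypothesis of the contrapositive would be used --- the step your proposal omits.
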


\par The caveat here, which is illustrated in the example~\eqref{ex:quadric} 
below, is that there is no reason for $Z(\nu_{\zeta})$ to contain a curve.
The advantage of working with the infinitesimal invariant is that it is
often easier to compute \cite{Griffiths}, and will vanish along directions tangent to 
$Z(\nu)$.  Of course, $I(\nu)$ will also contain the directions tangent
to any m-torsion locus of $\nu$, as well as potentially other components.

\begin{question} Is there an analog of Proposition~\eqref{prop:schnell} for 
$I(\nu_{\zeta})$?
\end{question}

\begin{remark} The study of zero loci of normal function also arises in
connection with the construction of the Bloch--Beilinson filtration on
Chow groups.  For a survey of results of this type, see~\cite{KP}.
\end{remark}

\par The determination of a good notion of the expected dimension of the
zero locus of a normal function is an important open problem in the study
of algebraic cycles.  In particular, in the Green--Griffiths setting, if a 
smooth projective variety has moduli, any reasonable expected dimension count 
is probably only valid at the generic point of the locus where the class
$\zeta$ remains a Hodge class.

\par In the case of a smooth projective surface $X$, if $L=\mathcal O(D)$ is a 
very ample line bundle, then a Riemann--Roch calculation shows the expected
dimension of the zero locus of the associated  normal functions arising from 
the Green--Griffiths program (i.e., comparing the dimensions of the fiber and 
the base) is
$$
     -(D\cdot K_X) + \chi(\mathcal O_X) - 2
$$
where $K_X$ is the canonical bundle of $X$.  For $X$ of general type, on the
basis of this calculation one would expect the zero locus to be empty for
all sufficiently ample $L$.  We close with a careful study of a simple
example of normal function of Green--Griffiths type for which the naive
expected dimension count is positive.

\begin{example}\label{ex:quadric} Let $X=\mathbb P^1\times\mathbb P^1$ viewed 
as the smooth quadric $Q=V(q)\subseteq\mathbb P^3$ defined by the vanishing of 
$
      q = x_0^2 + x_1^2 + x_2^2 + x_3^2
$.
Let $L_{\a}$ and $L_{\b}$ be the lines on $Q$ defined by the equations
$$
     L_{\a} : t\mapsto [1,t,it,i],\qquad
     L_{\b} : t\mapsto [1,t,-it,i]
$$
Then, the difference $\zeta = [L_{\a}]-[L_{\b}]$ is a primitive Hodge class on 
$X$.  For future use, we also introduce the line
$$
     L_{\gamma} : t\mapsto [1,t,-it,-i]
$$
which is parallel to $L_{\alpha}$ and intersects $L_{\beta}$ at $t=\infty$.

\par Let $P = \mathbb P H^0(X,\mathcal O(2))$.  Then, the associated normal
function $\nu_{\zeta}$ assigns to each smooth section 
$$
         X_{\sigma} = V(q)\cap V(\sigma)
$$
the class of 
$
      (L_{\a}-L_{\b})\cap V(\sigma)
$ 
in the Jacobian of $X_{\sigma}$.  A naive expected dimension count for the
zero locus of $\nu_{\zeta}$ can be obtained as follows:  The dimension
of $P$ is $8=10-1-1$ since the space of quadratic forms on $\C^4$ has
dimension $10$, and we need mod out by $Q$ and then projectivize.  The
adjunction formula shows the fibers $X_{\sigma}$ to have genus $1$.  
Accordingly, the graph of $\nu_{\zeta}$ in the associated bundle of
Jacobians $J\to P$ has codimension $1$.  Likewise, the zero section
of $J$ is also codimension $1$, and so to first approximation the
zero locus of $\nu_{\zeta}$ in this case should have codimension $2$
in $J$, which corresponds to a $7$-dimensional subvariety of $P$.

\par To see that the zero locus of $\nu_{\zeta}$ is in fact empty, let 
$Y\subset\mathbb P^3$ be a smooth quadric which intersects $X$ 
in a smooth curve $E$.  Let $\Lambda\subset X$ be a line of the form
$\{z\}\times\mathbb P^1$ which intersects $E$ in a pair of distinct 
points
$$
       e = (z,w),\qquad f = (z,w')
$$
Let the line $\Upsilon = \mathbb P^1\times\{w\}$ intersect $E$ in the divisor
$e+g$.  Then, since every line on $X$ is parallel to either 
$\Lambda$ or $\Upsilon$, it follows that $L_{\a}-L_{\b}$ intersects $E$
in a divisor which is linearly equivalent to 
$$
      (e+f) - (e+g) \sim f-g
$$
Accordingly, if $\nu_{\zeta}$ vanishes at $Y$ then $f\sim g$ and hence
$\Lambda = \Upsilon$.

\par As a consequence of symmetries however, the $2$-torsion locus of 
$\nu_{\zeta}$ is non-zero.  To be explicit, let 
$S = \mathbb C - \{-1,-i,0,i,1\}$ and $\mu:S\to P$ be the map which associates 
to a point $s\in S$ the quadric
$$
      Q_s = V(s^2 x_0^2 + x_1^2 - x_2^2 - s^2 x_3^2)
$$ 
Then, for each $s\in S$, the associated curve $X_{\mu(s)}$ is smooth.

\par Let $\theta$ be the involution of $\mathbb P^3$ induced by 
the linear map
$$
      (c_0,c_1,c_2,c_3) \mapsto (-c_3,-c_2,c_1,c_0)
$$
on $\mathbb C^4$.  Then, the lines $L_{\alpha}$ and $L_{\gamma}$ are the 
projectivizations of the $\pm i$-eigenspaces of this map, and hence are 
pointwise fixed under the action of $\theta$.  The involution $\theta$ also 
fixes the quadrics $Q$ and $Q_s$, and hence the curve $X_{\mu(s)}$.  
Consequently, the fixed points of the action $\theta$ on $X_{\mu(s)}$ are 
exactly the four points
$$
\aligned
       \a_1 &= [1,is,-s,i],\qquad \a_2 = [1,-is,s,i]  \\
       \g_1 &= [1,is,s,-i],\qquad \g_2 = [1,-is,-s,-i]
\endaligned
$$
corresponding to the intersection of the lines $L_{\a}$ and $L_{\g}$ with $Q_s$. 
The line $L_{\beta}$ on the other hand intersects $Q_s$ at the points
$$
       \b_1 =  [1,is,s,i],\qquad \b_2 = [1,-is,-s,i] 
$$
which are interchanged under the action of $\theta$.

\par Let 
$$
\aligned
     F_1 &= s x_0 + i x_1 - x_2 + i s x_3 \\
     F_2 &= s x_0 -i x_1 + x_2 + i s x_3  \\
     F_3 &= ix_1 + x_2
\endaligned
$$
Then, direct calculation shows that $V(F_1)$ is a plane passing through
$\{\a_1,\a_2,\g_1\}$ which is also tangent to $E_s$ at $\g_1$.  Similarly,
$V(F_2)$ is a plane passing through $\{\a_1,\a_2,\g_2\}$ which is tangent
to $E_s$ at $\g_2$.  Finally, $V(F_3)$ is a plane passing through 
$\{\b_1,\b_2,\g_1,\g_2\}$.  Moreover, one can easily check that these
planes have no additional points of intersection or tangency other than
the ones listed above.  Therefore, the rational function
$$
        F = (F_1 F_2)/F_3^2
$$
on $\mathbb P^3$ restricts to a meromorphic function on $E_s$ with divisor
$$
     (\a_1 + \a_2 + 2 \g_1) + (\a_1 + \a_2 + 2 \g_2) - 
     2(\b_1+\b_2+\g_1+\g_2) = 2(\a_1 + \a_2) - 2(\b_1+\b_2)
$$
and hence $2\nu_{\zeta}$ vanishes along the image of $\mu$.
\par Finally, to get a $7$-dimensional subvariety of $P$ as predicted above,
observe that the group $SO(4)$ has dimension $6$ and acts on $\mathbb P^3$
preserving the quadric $Q$.  This action also fixes the integral Hodge
class $\zeta$, and hence acts on the $2$-torsion locus. The orbit of
$S$ under the action of $SO(4)$ therefore provides a $7$-dimensional
complex analytic subvariety of $P$ on which $2\nu_{\zeta}$ vanishes.
\end{example}

\begin{remark}
The infinitesimal invariant for the intersection of a generic quadric $X$ and cubic
$Y$ in $\PPn{3}$ is considered in \cite{Griffiths}*{Section~6d}, where it is shown
that in this case, the invariant determines the curve $C = X \cap Y$.
\end{remark}

\section{References}

\begin{biblist}

\bib{BFNP}{article}{
  author={Brosnan, Patrick},
  author={Fang, Hao},
  author={Nie, Zhaohu},
  author={Pearlstein, Gregory},
  title={Singularities of admissible normal functions},
  journal={Invent. Math.}, 
}

\bib{BP}{article}{
  author={Brosnan, Patrick},
  author={Pearlstein, Gregory},
  title={On the algebraicity of the zero locus of an admissible normal 
         function},
  journal={arXiv:0910.0628},
}

\bib{dCM}{article}{
  author={de Cataldo, Mark Andrea.},
  author={Migliorini, Luca},
  title={On singularities of primitive cohomology classes},
  journal={Proc. Amer. Math. Soc.},
  volume={137},
  pages={3593-3600},
  date={2009},
}

\bib{Deligne}{book}{
   author={Deligne, Pierre},
   title={Equations diff\'erentielles \`a points singuliers r\'eguliers},
   series={Lecture Notes in Mathematics, Vol.~163},
   publisher={Springer-Verlag},
   place={Berlin},
   date={1970},
}

\bib{DM}{article}{
  author={Dimca, Alexandru},
  author={Saito, Morihiko},
  title={Vanishing cycle sheaves of one-parameter smoothings and 
         quasi-semistable degenerations},
  journal={Journal of Algebraic Geometry},
  volume={21},
  date={2012},
}

\bib{GG}{article}{
   author={Green, Mark},
   author={Griffiths, Phillip},
   TITLE = {Algebraic cycles and singularities of normal functions},
   conference={
       title={Algebraic Cycles and Motives},
       address={Grenoble},
       date={2007},
    },
    book={
       series={London Mathematical Society Lecture Note Series},
       volume={343},
       publisher={Cambridge University Press},
       place={Cambridge, UK},
    },
    date={2007},
    pages={206--263},
}

\bib{Griffiths}{article}{
	author={Griffiths, Phillip},
	title={Infinitesimal variations of hodge structure (III):
		determinantal varieties and the infinitesimal invariant of normal functions},
	journal={Compositio Math.},
	volume={50},
	date={1983}, 
	number={2--3}, 
	pages={267--324},
}

\bib{KNU}{article}{
  author={Kato, Kazuya},
  author={Nakayama, Chikara},
  author={Usui, Sampei},
  title={Analyticity of the closures of some Hodge theoretic subspaces},
  journal={Proc. Japan. Acad.},
  volume={87},
  pages={167--172},
  date={2011},
}

\bib{KP}{article}{
  author={Kerr, Matt},
  author={Pearlstein, Gregory},
  title={An exponential history of functions with logarithmic growth},
  journal={MSRI Publ.},
  volume={58},
  date={2010},
}

\bib{Saito-MHM}{article}{
   author={Saito, Morihiko},
   title={Mixed Hodge modules},
	journal={Publ. Res. Inst. Math. Sci.},
   volume={26},
   date={1990},
   number={2},
   pages={221--333},
}
\bib{Saito-ANF}{article}{
   author={Saito, Morihiko},
   title={Admissible normal functions},
   journal={J. Algebraic Geom.},
   volume={5},
   date={1996},
   number={2},
   pages={235--276},
}

\bib{Sch1}{article}{
  author={Schnell, Christian},
  title={Two observations about normal functions},
  journal={Clay Math. Proc.},
  volume={9},
  pages={75--79},
  date={2010},
}

\bib{Sch2}{article}{
  author={Schnell, Christian},
  title={Complex-analytic N\'eron models for arbitrary families of intermediate
  Jacobians},
  journal={Invent. Math.},
  volume={188},
  pages={1--81},
  date={2012},
}

\end{biblist}

\end{document}